\theoremstyle{plain}
\newtheorem{thm}{Theorem}
\newtheorem{prop}[thm]{Proposition}
\newtheorem{cor}[thm]{Corollary}
\theoremstyle{definition}
\newcommand{\abs}[1]{\left\lvert #1 \right\rvert}
\newcommand{\norm}[1]{\left\| #1 \right\|}
\DeclareMathOperator{\spec}{spec}
\newcommand{\bbD}{\mathbb{D}}
\newcommand{\mattwo}[4]{\begin{bmatrix}#1 & #2 \\ #3 & #4 \end{bmatrix}}
\begin{document}

\title[Foguel]{A note on the norm and spectrum of a \\ Foguel operator}

\author[Mrinal Raghupathi]{Mrinal Raghupathi} 
\date{\today}
\address{Department of Mathematics, Vanderbilt University \\
  Nashville, Tennessee, 37240, U.S.A.}
\email{mrinal.raghupathi@vanderbilt.edu}
\urladdr{http://www.math.vanderbilt.edu/~mrinalr} \thanks{}
\subjclass[2000]{Primary 47A10; Secondary 47A30}
\keywords{Foguel operator} 
\begin{abstract}
  We present two ways to compute the norm of a Foguel operator. One of these is algebraic and the other makes use of the Schur complement. This
  gives a two simpler proof of a recent result of
  Garcia~\cite{garcia}. We also provide an extension of these results.
\thanks{The author was partially supported by a Young investigator award from Texas A{\&}M Univeristy in 2009.}
\end{abstract}

\maketitle

\section{Introduction}
Let $H$ be a Hilbert space. Given an isometry $V\in B(H)$ and an
operator $T\in B(H)$, the \textit{Foguel operator} with symbol $T$ is
defined by
\[
R_T = \begin{bmatrix}V^\ast & T \\ 0 & V\end{bmatrix}.
\]

In \cite{garcia}, a formula is given for the norm of a Foguel operator
$R_T$. The proof in~\cite{garcia} is based on an antilinear eigenvalue
problem and certain properties of complex symmetric matrices. In this
note, we give a more direct proof of this fact based on a direct
computation of the inverse. We also show how the formula for the norm
can be obtained as an application of the Schur complement.

Foguel operators have played a central role in counterexamples to
similarity conjectures. The most famous of these is Pisier's
counterexample~\cite{pisier} to the Halmos conjecture~\cite{halmos}.

\section{Spectrum and norm}

In this section we derive a relationship between the spectrum of the
operator $R_TR_T^*$ and $TT^*$. The result is due to
Garcia~\cite{garcia} but the proof given here is more direct. 

In the proof we make use of the fact that if $R$ is a selfadjoint
operator on a Hilbert space, then $R$ is invertible if and only if $R$
is right-invertible. To see this, suppose that $RS = I$. Then $S^*R^*
= S^*R = I$. Hence, $R$ has a left-inverse, and is invertible.

\begin{thm}[Garcia]\label{mainthm}
  If $\lambda>0$ and $\lambda\not=1$, then $\lambda\in
  \spec(R_TR_T^*)$ if and only if
  $\lambda^{-1}(\lambda-1)^2\in\spec(TT^*)$. The norm of $R_T$ is given by
\[
\norm{R_T} = \frac{\norm{T}+\sqrt{\norm{T}^2+4}}{2}.
\]
\end{thm}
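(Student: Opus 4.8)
The plan is to reduce everything to a single Schur complement computation. First I would form the positive operator
\[
R_T R_T^* = \mattwo{I + TT^*}{TV^*}{VT^*}{VV^*},
\]
using $V^*V = I$, and record that $\norm{R_T}^2 = \norm{R_T R_T^*} = \sup\spec(R_T R_T^*)$, so that the norm formula will follow once the top of the spectrum is located. The key structural observation is that $P := VV^*$ is an orthogonal projection, so the bottom-right block of $R_T R_T^* - \lambda I$, namely $VV^* - \lambda I = P - \lambda I$, is invertible for every $\lambda \notin \{0,1\}$, with $(P - \lambda I)^{-1} = \frac{1}{1-\lambda}P - \frac{1}{\lambda}(I - P)$.

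Next, for $\lambda > 0$ with $\lambda \neq 1$ I would apply the Schur complement. Since the $(2,2)$ block is invertible, $R_T R_T^* - \lambda I$ is invertible if and only if its Schur complement
\[
S = \bigl((1-\lambda)I + TT^*\bigr) - TV^*(P - \lambda I)^{-1}VT^*
\]
is invertible. The computation collapses because $V^*PV = V^*VV^*V = I$ and $V^*(I-P)V = 0$, whence $V^*(P-\lambda I)^{-1}V = \frac{1}{1-\lambda}I$ and therefore
\[
S = (1-\lambda)I - \frac{\lambda}{1-\lambda}\,TT^*.
\]
Since $1 - \lambda \neq 0$, $S$ is invertible if and only if $I - \frac{\lambda}{(1-\lambda)^2}TT^*$ is, i.e.\ if and only if $\lambda^{-1}(\lambda-1)^2 = \frac{(1-\lambda)^2}{\lambda}$ is \emph{not} in $\spec(TT^*)$. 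Taking complements gives the stated spectral equivalence. (Alternatively, one can write down an explicit candidate for $(R_T R_T^* - \lambda I)^{-1}$ and verify it is a right inverse; since $R_T R_T^* - \lambda I$ is selfadjoint, the fact recalled above upgrades this to genuine invertibility.)

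Finally, for the norm I would study $f(\lambda) = \lambda^{-1}(\lambda-1)^2 = \lambda - 2 + \lambda^{-1}$ on $(0,\infty)$. It is decreasing on $(0,1)$ and increasing on $(1,\infty)$ with minimum $f(1) = 0$, so each value $s = f(\lambda) > 0$ has two preimages, the larger lying in $(1,\infty)$; consequently the top of $\spec(R_T R_T^*)$ is obtained from the largest available $s$, namely $s = \norm{T}^2 = \max\spec(TT^*)$, together with the larger root of $\lambda^2 - (2+s)\lambda + 1 = 0$. Solving and using $(2 + \norm{T}^2)^2 - 4 = \norm{T}^2(\norm{T}^2 + 4)$ yields
\[
\norm{R_T}^2 = \frac{\norm{T}^2 + 2 + \norm{T}\sqrt{\norm{T}^2 + 4}}{2} = \left(\frac{\norm{T} + \sqrt{\norm{T}^2+4}}{2}\right)^2,
\]
which is the claimed formula. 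I expect the main obstacle to be this last bookkeeping step: correctly arguing that the supremum of the spectrum is attained at the larger root coming from $s = \norm{T}^2$ (rather than from some smaller spectral value or from the excluded value $\lambda = 1$), and checking the degenerate case $T = 0$, where both sides equal $1$.
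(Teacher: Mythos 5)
Your proof is correct, but it takes a genuinely different route from the paper's proof of this theorem. The paper's argument is algebraic: it posits a selfadjoint block inverse $S = \mattwo{A}{X}{X^*}{B}$ for $R_TR_T^*-\lambda I$, writes out the four block equations, and solves them to obtain $A = \frac{\lambda}{\lambda-1}\bigl(TT^* - \frac{(\lambda-1)^2}{\lambda}I\bigr)^{-1}$ together with explicit $X$ and $B$, reading off the spectral correspondence from the existence of this inverse; the remark that a right-invertible selfadjoint operator is invertible is what lets a one-sided verification suffice. You instead run the entire spectral statement through the Schur complement of the $(2,2)$ block, exploiting that $VV^*$ is a projection to invert $VV^*-\lambda I$ in closed form and collapse $V^*(VV^*-\lambda I)^{-1}V$ to $\frac{1}{1-\lambda}I$; your computation of the Schur complement $(1-\lambda)I - \frac{\lambda}{1-\lambda}TT^*$ is correct. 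The paper does use a Schur complement, but only in its Section 3, only as a positivity criterion rather than an invertibility criterion, and only to recover the norm identity $\norm{T} = \norm{R_T}^{-1}(\norm{R_T}^2-1)$ rather than the full spectral equivalence; there the resolvent of $VV^*$ is expanded in a Neumann series instead of via the projection decomposition. Your version is arguably the most economical of the three, since one computation yields both halves of the theorem, at the mild cost of invoking the standard fact (valid for operator blocks) that a block operator with invertible $(2,2)$ corner is invertible if and only if its Schur complement is. The final bookkeeping step you flag is handled essentially as briefly in the paper and is indeed routine: for $T\neq 0$ one has $\norm{T}^2\in\spec(TT^*)$, which places the larger root $\lambda_+>1$ of $f(\lambda)=\norm{T}^2$ in $\spec(R_TR_T^*)$, while monotonicity of $f$ on $[1,\infty)$ shows no point of the spectrum exceeds $\lambda_+$; the case $T=0$ is checked directly.
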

\begin{proof}
  Suppose that $\lambda \not\in \spec(R_TR_T^\ast )$. Since
  $R_TR_T^*-\lambda I$ is invertible there exists a selfadjoint
  operator
\[
S = \begin{bmatrix} A & X \\ X^\ast & B \end{bmatrix}
\] 
such that $R_TR_T^\ast S = \lambda S + I$. We have
\[
R_TR_T^\ast = 
\mattwo{V^\ast V + TT^\ast}{TV^\ast}{VT^\ast}{VV^\ast} =
\mattwo{I+TT^\ast}{TV^\ast}{VT^*}{VV^*}.
\]
Writing out the entries of the operator matrix equation $R_TR_T^*S =
\lambda S + I$ we get
\begin{align}
\label{aa}(I+TT^*)A + TV^*X^* &= \lambda A + I\\
\label{ab}(I+TT^\ast)X + TV^*B &= \lambda X\\ 
\label{ba}VT^*A + VV^*X^* &= \lambda X^*\\
\label{bb}VT^*X+VV^*B &= \lambda B + I.
\end{align}
Using the fact that $V^*V = I$ and multiplying equation \eqref{ba} on
the left by $V^*$ we get $T^*A+V^*X^* = \lambda V^*X^*$, which gives
\begin{equation}
\label{eq1}T^*A = (\lambda-1)V^*X^*.
\end{equation}
Multiplying \eqref{aa} by $(\lambda-1)$ and substituting the expression from \eqref{eq1} we get
\begin{equation}
(\lambda-1)(I+TT^*)A + TT^\ast A = (\lambda-1)(\lambda A  + I).
\end{equation}
Rearranging and simplifying this last equation we get
\[
((\lambda-1)I+I)TT^*A+((\lambda-I)+\lambda(\lambda-1))A = (\lambda
-1)I
\] 
or
\[
(\lambda TT^*-(\lambda-I)^2)A = (\lambda-1)I.
\] 

Since we have assumed that $\lambda\not=0,1$ we get
\begin{equation}
\label{eq4}
A = \frac{\lambda}{\lambda-1}\left(TT^* -
  \frac{(\lambda-1)^2}{\lambda}I\right)^{-1}.
\end{equation} 
Hence $\lambda^{-1}(\lambda-1)^2\not\in\spec(TT^*)$.

The operators $X$ and $B$ can also be computed explicitly. The choice
of $X = \frac{1}{\lambda-1} ATV^\ast$ satisfies
\eqref{eq1}. Multiplying \eqref{bb} by $V^*$ we get $T^*X + V^*B =
\lambda V^*B + V^*$ and so
\begin{equation}
\label{eq2}(\lambda-1)V^*B = T^*X - V^*. 
\end{equation}
The choice 
\begin{equation}
\label{eq3}B = (\lambda-1)^{-1}(VT^*X - VV^*)
\end{equation}
is a solution of \eqref{eq2}.

Now consider the case where $\mu\not\in \spec(TT^*)$ and assume that
$\mu\not=0$. The equation $\mu = (\lambda-1)^2/\lambda$ has two
positive solutions, choose $\lambda$ to be either of these
solutions. We choose $A,X,B$ as in the equations above.  It is a
routine, but straightforward, calculation to show that this choice of
$A,B,X$ satisfies \eqref{aa}--\eqref{bb}.

From the relationship between the spectrum of $R_TR_T^*$ and the
spectrum of $TT^*$ we get that
\begin{equation}
\norm{T}^2 = \sup\{\lambda^{-1}(\lambda-1)^2 \,:\, \lambda\in
\spec(R_TR_T^*)\}.
\end{equation}

The equation $\norm{T}^2 = \lambda^{-1}(\lambda-1)^2$ has two
solutions of the form $\lambda,\lambda^{-1}$. The function $f(\lambda)
= \lambda^{-1}(\lambda-1)^2$ is increasing for $\lambda\geq 1$ and
$\norm{T}^2$ is the maximum value of the function $f$ on the set
$\spec(R_TR_T^*)$. Since, $\norm{R_T}\geq \norm{V} = 1$ we see that
\begin{equation}
\label{eq6}\norm{T}^2 = (\norm{R_T}^2 - 1)/\norm{R_T}.
\end{equation}
This gives,
\begin{equation}
\norm{R_T} = \frac{\norm{T} + \sqrt{\norm{T}^2+4}}{2}.
\end{equation}
\end{proof}

The proof of Theorem~\ref{mainthm} actually gives us a little more
information about the invertibility of $R_TR_T^*$ and $TT^*$.

\begin{prop} 
  The operator $R_TR_T^*$ is invertible if and only if $V$ is
  unitary. The operator $R_TR_T^*-I$ is invertible if and only if $T$
  is invertible.
\end{prop}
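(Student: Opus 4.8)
The plan is to prove the two statements separately. In each case the relevant operator ($R_TR_T^*$, respectively $R_TR_T^*-I$) is self-adjoint, so I can invoke the remark from the introduction: a self-adjoint operator is invertible as soon as it is right-invertible. These two statements are also exactly the cases $\lambda=0$ and $\lambda=1$ that Theorem~\ref{mainthm} had to exclude, so in effect this proposition fills in the endpoints of the spectral correspondence.

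For the first statement I would characterize invertibility of the positive operator $R_TR_T^*$ through surjectivity of $R_T$: since $R_TR_T^*$ is positive and self-adjoint, it is invertible if and only if $R_T^\ast$ is bounded below, which holds if and only if $R_T$ is onto. Writing $R_T=\mattwo{V^\ast}{T}{0}{V}$, surjectivity forces the bottom equation $Vx_2=y_2$ to be solvable for every $y_2$, so $V$ must be onto; as $V$ is an isometry this means $V$ is unitary. Conversely, if $V$ is unitary then $V$ and $V^\ast$ are both onto and the triangular system is solved directly, so $R_T$ is surjective.

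For the second statement I would work directly with $M:=R_TR_T^*-I=\mattwo{TT^\ast}{TV^\ast}{VT^\ast}{VV^\ast-I}$. The computational heart of the argument is the identity that on a vector $(h,k)$, writing $w=T^\ast h+V^\ast k$, one has $M(h,k)=(Tw,\,Vw-k)$; the relation $V^\ast V=I$ is what collapses the cross terms into this clean form. If $T$ is invertible I would read off from this the explicit two-sided inverse $\mattwo{0}{(T^\ast)^{-1}V^\ast}{VT^{-1}}{-I}$, or equivalently produce it as a right inverse and invoke the self-adjoint trick; this is the easy direction.

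The converse is where the main obstacle lies: a bare kernel computation with the identity above only yields $\ker T=\ker T^\ast=0$, i.e. that $T$ is injective with dense range, which is strictly weaker than invertibility. To upgrade this I would use that $M$, being self-adjoint and invertible, is bounded below by some $c>0$, and test this against two families of vectors. On $(0,Vw)$ the identity gives $M(0,Vw)=(Tw,0)$, so $\norm{Tw}\geq c\norm{w}$ and $T$ is bounded below; on $(h,0)$ it gives $M(h,0)=(TT^\ast h,\,VT^\ast h)$, and since $\norm{VT^\ast h}=\norm{T^\ast h}$ a comparison of $\norm{TT^\ast h}^2+\norm{T^\ast h}^2$ with $(\norm{T}^2+1)\norm{T^\ast h}^2$ forces $T^\ast$ bounded below, i.e. $T$ onto. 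Together these give $T$ invertible. I expect that elementary norm estimate to be the one genuinely non-formal step.
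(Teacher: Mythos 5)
Your proof is correct, but it takes a genuinely different route from the paper's. The paper simply revisits the equations \eqref{aa}--\eqref{bb} from the proof of Theorem~\ref{mainthm} at the excluded values $\lambda=0$ and $\lambda=1$: invertibility of $R_TR_T^*-\lambda I$ gives a selfadjoint $S$ with $(R_TR_T^*-\lambda I)S=I$, and the entries yield $VY=I$ (so the isometry $V$ is onto, hence unitary) in the first case, and both a right inverse $T(T^*A+V^*X^*)=I$ and a left inverse $V^*X^*T=I$ for $T$ in the second, so $T$ is invertible by pure algebra; the converses are handled by exhibiting explicit inverses, including essentially the same matrix $\mattwo{0}{(T^*)^{-1}V^*}{VT^{-1}}{-I}$ you wrote down. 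You instead argue metrically: for the first claim you convert invertibility of the positive operator $R_TR_T^*$ into surjectivity of $R_T$ and read surjectivity of $V$ off the triangular form, which is cleaner and independent of the theorem's computations; for the second you use the factored action $M(h,k)=(Tw,\,Vw-k)$ with $w=T^*h+V^*k$ and test the lower bound $\norm{Mx}\geq c\norm{x}$ on the families $(0,Vw)$ and $(h,0)$ to get $T$ and $T^*$ both bounded below. Your estimates check out (in particular $\norm{TT^*h}^2+\norm{T^*h}^2\leq(\norm{T}^2+1)\norm{T^*h}^2$ does force $T^*$ bounded below, hence $T$ onto), and you were right to flag that injectivity plus dense range alone would not suffice. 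The trade-off: the paper's hard direction is purely algebraic and avoids any norm estimates, while yours is self-contained and does not require importing the solution $S=\mattwo{A}{X}{X^*}{B}$ constructed inside the proof of Theorem~\ref{mainthm}.
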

\begin{proof}
  From \eqref{bb} we have, $V(T^*X + V^*B) = I$ and so $VY = I$ for
  some $Y$. Hence, $Y = (VV^*)Y = V^\ast( VY ) = V^\ast$. Which proves
  that $VV^\ast =I$. 

  On the other hand, if $V$ is unitary, then
  $\mattwo{V}{-VTV^*}{0}{V^*}$ is the inverse of $R_T{R_T}^*$.

  Now consider the case where $R_TR_T^\ast - I$ is invertible. In this
  case from \eqref{aa} we get $TT^*A + TV^*X^* = I$, which gives, $TY
  = I$ for some $Y$. Multiply equation \eqref{bb} by $V^*$ we get
  $T^*X = V^*$. Now multiply by $V$ to get $T^*XV = I$. Hence, $ZT =
  I$ for some $Z$ and so $T$ is invertible. However, we now get from
  \eqref{eq1} that $T^\ast A = 0$ and so $A=0$. In this case, the
  inverse of $R_TR_T^\ast -I$ is given by $\mattwo{0}{X}{X^\ast}{B}$,
  where $X = (T^{-1})^*V^*$ and $B = -I$.

  Conversely, if $T$ is invertible, then there exists $Z$ such that
  $ZT = TZ = I$ and we can write down the inverse of $R_TR_T^*-I$ as above.
\end{proof}

As a final note in this section we point out that we can strengthen the power-bounded
result in~\cite{garcia} to the case of polynomials in $R_T$. 

\begin{prop}\label{contractive}
  Let $A$ be a contraction and let $T$ be an operator on $H$. Let $R
  = \begin{bmatrix} A^* & T \\ 0 & A \end{bmatrix}$. We have,
\[\norm{R} \leq \frac{\norm{T}+\sqrt{\norm{T}^2+4}}{2}\]
\end{prop}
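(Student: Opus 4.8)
The plan is to treat this as a generalization of the upper-bound half of Theorem~\ref{mainthm}, in which the isometry $V$ is replaced by an arbitrary contraction $A$ and the norm estimate is established directly through the Schur complement, so that the hypothesis $V^*V = I$ is never invoked. Write $c = \frac{\norm{T}+\sqrt{\norm{T}^2+4}}{2}$ for the claimed bound. A one-line computation shows that $c$ is the positive root of $c^2 - \norm{T}c - 1 = 0$, equivalently $c^2 - 1 = \norm{T}\,c$, and that $c \geq 1$ with $c = 1$ exactly when $T = 0$. The case $T = 0$ is immediate, since then $R = \mattwo{A^*}{0}{0}{A}$ has $\norm{R} = \norm{A} \le 1 = c$, so I would assume $T \neq 0$ and hence $c > 1$.

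It suffices to prove $\norm{R}^2 \le c^2$, i.e. $c^2 I - RR^* \ge 0$. Computing
\[
RR^* = \mattwo{A^*A + TT^*}{TA^*}{AT^*}{AA^*},
\]
I would apply the Schur complement to $c^2 I - RR^*$ relative to its lower-right block $S = c^2 I - AA^*$. Since $A$ is a contraction, $AA^* \le I$, so $S \ge (c^2-1)I > 0$ is invertible, and $c^2 I - RR^* \ge 0$ if and only if the Schur complement
\[
c^2 I - A^*A - TT^* - TA^*(c^2 I - AA^*)^{-1}A\,T^*
\]
is positive semidefinite.

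The key simplification is the intertwining relation $A^*(AA^*)^n = (A^*A)^n A^*$, which by functional calculus gives $A^*(c^2 I - AA^*)^{-1}A = (c^2 I - A^*A)^{-1}A^*A$. Substituting this and collecting the two $T(\cdots)T^*$ terms via $I + (c^2 I - A^*A)^{-1}A^*A = c^2(c^2 I - A^*A)^{-1}$ reduces the Schur complement to
\[
(c^2 I - A^*A) - c^2\, T(c^2 I - A^*A)^{-1}T^*.
\]
Now I would estimate using the order relations $A^*A \le I$ and $TT^* \le \norm{T}^2 I$: from $c^2 I - A^*A \ge (c^2-1)I$ one gets $(c^2 I - A^*A)^{-1} \le (c^2-1)^{-1}I$, hence $T(c^2 I - A^*A)^{-1}T^* \le (c^2-1)^{-1}\norm{T}^2 I$, so the displayed operator is bounded below by $\frac{(c^2-1)^2 - c^2\norm{T}^2}{c^2-1}\,I$. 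Because $c^2 - 1 = \norm{T}c$, the numerator vanishes and the Schur complement is $\ge 0$, completing the argument.

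The main obstacle is that $A$ and $T$ do not commute, which a naive scalar computation would obscure; this is handled by the intertwining identity (to transport the resolvent from $AA^*$ to $A^*A$) together with the operator-monotonicity fact that $P \le Q$ implies $TPT^* \le TQT^*$. It is also worth pinpointing where the isometry hypothesis was spent in Theorem~\ref{mainthm}: there $V^*V = I$ forces the top-left block to be exactly $I + TT^*$, whereas here it is only $A^*A + TT^* \le I + TT^*$, and it is precisely this inequality (through $A^*A \le I$) that converts the sharp equality of Theorem~\ref{mainthm} into an upper bound for $\norm{R}$.
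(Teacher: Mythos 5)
Your proof is correct, but it takes a genuinely different route from the paper. The paper's argument is a dilation argument: it forms the Sz.-Nagy isometric dilation $V_A$ of $A$, builds the honest Foguel operator $W = \mattwo{V_A^*}{\tilde T}{0}{V_A}$ with symbol $\tilde T$ (the $2\times 2$ matrix with $T$ in the $(1,2)$ corner), invokes Theorem~\ref{mainthm} to get $\norm{W} = \tfrac12\bigl(\norm{T}+\sqrt{\norm{T}^2+4}\bigr)$, and observes that $R$ is a compression of $W$, so $\norm{R}\le\norm{W}$. You instead run a direct Schur-complement computation on $c^2I - RR^*$, which is essentially the Section~3 argument of the paper extended from isometries to arbitrary contractions; the new ingredients you need for that extension are exactly right, namely the intertwining identity $A^*(c^2I-AA^*)^{-1}=(c^2I-A^*A)^{-1}A^*$ (to replace the collapse $V^*(VV^*)^jV=I$ that is only available for isometries) and the monotonicity facts $A^*A\le I \Rightarrow (c^2I-A^*A)^{-1}\le (c^2-1)^{-1}I$ and $P\le Q\Rightarrow TPT^*\le TQT^*$. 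All the algebra checks out: $c^2-1=\norm{T}c$ makes the final lower bound $\frac{(c^2-1)^2-c^2\norm{T}^2}{c^2-1}$ vanish, and you correctly set aside $T=0$ so that $c^2I-AA^*$ is boundedly invertible. The trade-off: the paper's proof is shorter once Theorem~\ref{mainthm} is in hand and showcases the dilation philosophy, while yours is self-contained (it needs neither Theorem~\ref{mainthm} nor any dilation), produces an explicit positivity certificate, and makes visible precisely where the contraction hypothesis enters --- only through $A^*A\le I$ --- which cleanly explains why the sharp equality for isometries degrades to an inequality here, just as you note at the end.
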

\begin{proof}
  Let $V_A$ denote the usual isometric Sz.-Nagy dilation of $A$, that is, 
  \[V_A = \begin{bmatrix} A & (I-AA^*)^{1/2} \\ (I-A^*A)^{1/2} &
    A^*\end{bmatrix}.\] Let $\tilde{T}$ denote the $2\times 2$
  operator matrix that has $T$ in its $(1,2)$ entry and is 0
  otherwise. Let $W = \begin{bmatrix} V_{A^*} & \tilde{T} \\ 0 &
    V_A\end{bmatrix}$. Note that $V_{A^*} = V_A^*$. Since $V_A$ is an
  isometry, $W$ is a Foguel operator with symbol $\tilde{T}$ and so
  $\norm{W} = \frac{1}{2}\big(\norm{T} + (\norm{T}^2+4)^{1/2}\big)$, since
  $\|\tilde{T}\| = \norm{T}$. If we view $W$ as a $4\times 4$
  operator matrix, then the operator $R$ is the compression of $W$ to
  the first and fourth row and column. Hence, $\norm{R} \leq \norm{W}$.
\end{proof}

\begin{cor}
  Let $p(z) = a_0+a_1z+\cdots+a_mz^m$ be a polynomial. Let
  $\norm{p}_\infty:= \sup_{z\in \bbD} \abs{p(z)}$ and let $\tilde{p}(z) = \abs{a_0}+\abs{a_1}z+\cdots + \abs{a_m}z^m$. Let $A$ and $T$ be
  operators on $H$ with $\norm{A}\leq 1$ and let
  \[R = \begin{bmatrix} A^* & T \\ 0 & A \end{bmatrix}.\]
  We have, 
  \[\norm{p(R)} \leq \frac{\norm{\tilde{p}'}_\infty\norm{T} +
    \sqrt{\norm{\tilde{p}'}_\infty^2\norm{T}^2+4}}{2}\]
\end{cor}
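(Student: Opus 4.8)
The plan is to compute $p(R)$ explicitly, recognize it as a block upper-triangular operator whose off-diagonal corner is controlled by $\norm{\tilde p'}_\infty\norm T$, and then bound its norm by a version of Proposition~\ref{contractive} in which the two diagonal entries are allowed to be distinct contractions. Since $R$ is upper triangular, an easy induction gives $R^n=\mattwo{(A^*)^n}{C_n}{0}{A^n}$, where the recursion $C_{n+1}=(A^*)^nT+C_nA$ unwinds to $C_n=\sum_{k=0}^{n-1}(A^*)^{n-1-k}TA^k$. Summing against the coefficients of $p$ yields
\[
p(R)=\mattwo{p(A^*)}{S}{0}{p(A)},\qquad S=\sum_{n=1}^m a_n\sum_{k=0}^{n-1}(A^*)^{n-1-k}TA^k.
\]
Thus the proof splits into two independent tasks: estimating $\norm S$, and bounding the norm of a block upper-triangular operator with contractive diagonal.

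For the first task, note that $\norm A\le1$ forces $\norm{(A^*)^{n-1-k}TA^k}\le\norm T$, so $\norm{C_n}\le n\norm T$ and hence $\norm S\le\big(\sum_{n=1}^m n\abs{a_n}\big)\norm T$. Since $\tilde p'(z)=\sum_{n=1}^m n\abs{a_n}z^{n-1}$ has nonnegative coefficients, its modulus on $\bbD$ is maximized at $z=1$, so $\norm{\tilde p'}_\infty=\sum_{n=1}^m n\abs{a_n}$ and therefore $\norm S\le\norm{\tilde p'}_\infty\norm T$.

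The crux is the second task, and here Proposition~\ref{contractive} does not apply verbatim: because $p(A^*)\ne p(A)^*$ when $p$ has complex coefficients, $p(R)$ is not itself a Foguel operator. What is needed is the following strengthening, which I would prove by the Schur-complement method advertised in the introduction. If $B,C$ are contractions and $S$ is arbitrary, then $\norm{\mattwo{B}{S}{0}{C}}\le\tfrac12\big(\norm S+\sqrt{\norm S^2+4}\big)$. Writing $M=\mattwo{B}{S}{0}{C}$ and $\lambda=\big(\tfrac12(\norm S+\sqrt{\norm S^2+4})\big)^2$, one has $\lambda\ge1$, with $\lambda>1$ once $S\ne0$ (the case $S=0$ being immediate), and the inequality is equivalent to $\lambda I-MM^*\succeq0$. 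Computing $MM^*=\mattwo{BB^*+SS^*}{SC^*}{CS^*}{CC^*}$ and taking the Schur complement of the invertible block $\lambda I-CC^*\succ0$, positivity reduces to $BB^*+S\big(I+C^*(\lambda I-CC^*)^{-1}C\big)S^*\preceq\lambda I$. The intertwining identity $C^*(\lambda I-CC^*)^{-1}=(\lambda I-C^*C)^{-1}C^*$ collapses the bracketed factor to $\lambda(\lambda I-C^*C)^{-1}$, and since $\norm C\le1$ gives $(\lambda I-C^*C)^{-1}\preceq(\lambda-1)^{-1}I$ while $BB^*\preceq I$, the condition follows from $\lambda\norm S^2\le(\lambda-1)^2$. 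A direct check shows this last inequality is in fact an equality for the chosen $\lambda$, so $\lambda I-MM^*\succeq0$ as required. I expect this Schur-complement positivity to be the main obstacle, since it is the step where both contractivity hypotheses on the diagonal are consumed.

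To finish, I would apply the strengthened inequality to $p(R)$. By von Neumann's inequality $\norm{p(A)}\le\norm p_\infty$ and $\norm{p(A^*)}\le\norm p_\infty$, so both diagonal entries are contractions (under the normalization $\norm p_\infty\le1$), and the inequality gives $\norm{p(R)}\le\tfrac12\big(\norm S+\sqrt{\norm S^2+4}\big)$. Since $x\mapsto\tfrac12(x+\sqrt{x^2+4})$ is increasing and $\norm S\le\norm{\tilde p'}_\infty\norm T$, substituting the bound on $\norm S$ yields the claimed estimate.
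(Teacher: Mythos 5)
Your proof is correct, and it diverges from the paper's at the decisive step in a way that is worth recording. The paper computes $p(R)$ and bounds the corner exactly as you do, but then finishes by citing Proposition~\ref{contractive}, whose proof goes through the Sz.-Nagy dilation $V_A$ and a compression argument. As you correctly observe, that proposition does not apply verbatim: the diagonal entries of $p(R)$ are $p(A^*)$ and $p(A)$, and $p(A^*)=p(A)^*$ only when $p$ has real coefficients, so for general $p$ the matrix $p(R)$ is not of the form $\mattwo{B^*}{S}{0}{B}$ treated by the proposition. You repair this by proving the genuinely stronger lemma that $\norm{\mattwo{B}{S}{0}{C}}\leq \tfrac12\bigl(\norm{S}+\sqrt{\norm{S}^2+4}\bigr)$ for \emph{any} two contractions $B,C$, and your Schur-complement verification of this (the intertwining identity $C^*(\lambda I-CC^*)^{-1}=(\lambda I-C^*C)^{-1}C^*$, the collapse to $\lambda(\lambda I-C^*C)^{-1}$, and the terminal equality $\lambda\norm{S}^2=(\lambda-1)^2$ at $\lambda=\mu^2$ with $\mu^2-\mu\norm{S}-1=0$) checks out, including the separate treatment of $S=0$ so that $\lambda I-CC^*$ is invertible when the Schur complement is taken. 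So your argument is not only a different route --- positivity and Schur complements in the spirit of Section~3, rather than dilation --- it actually closes a small gap in the paper's own deduction for polynomials with non-real coefficients, at the cost of proving a lemma the paper does not state. One shared caveat: both you and the paper invoke the normalization $\norm{p}_\infty\leq 1$ without comment; since the right-hand side of the corollary does not scale linearly in $p$, this is really a needed hypothesis (take $p\equiv 2$, $T=0$) rather than a harmless reduction, but that is a defect of the statement, not of your proof.
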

\begin{proof}
  First note that we can assume that $\norm{p}_\infty \leq 1$. A
  simple computation shows that
  \[R^n = \begin{bmatrix} (A^*)^n & \sum_{j=0}^{n-1} (A^*)^j T
    A^{n-1-j} \\ 0 & A^n \end{bmatrix}\] Let us denote the operator in
  the upper right corner of the above matrix by $D_n(A,T)$ for $n\geq 1$.

We have,
\[p(R) = \mattwo{p(A)^*}{\sum_{j=1}^m a_jD_j(A,T)}{0}{A^n}.\] Since $A$
is a contraction, von-Neumann's inequality tells us that $\norm{p(A)}
\leq \norm{p}_\infty \leq 1$. Now,
\begin{align*}
  \norm{\sum_{j=1}^m a_j D_j(A,T)} \leq &
  \sum_{j=1}^m \abs{a_j}\norm{D_j(A,T)} \leq \sum_{j=1}^m \abs{a_j}\sum_{i=0}^{j-1} \norm{(A^*)^i T A^{j-1-i}}\\
  \leq &\sum_{j=1}^m \abs{a_j}\sum_{i=0}^{j-1} \norm{A}^{j-1} \norm{T}
  \leq \norm{T} \sum_{j=1}^m j\abs{a_j}\norm{A}^{j-1} \\
= &  \norm{T}\tilde{p}'(\norm{A}) \leq \norm{T}\norm{\tilde{p}'}_\infty 
\end{align*}
The result now follows from Proposition~\ref{contractive}.
\end{proof}

An application of the previous proposition with $p(z)= z^n$ gives the
result on the norm of $R_T^n$ obtained in~\cite{garcia}. We have $p =
\tilde{p}$ and $\norm{p'}_\infty = n$. Hence, we obtain the norm estimate
\[\norm{R_T^n} \leq \frac{n\norm{T}+\sqrt{n^2\norm{T}^2+4}}{2}.\]

\section{A positivity proof of the norm}
In this section we present a short proof of the norm equality using the
Schur complement. 

We begin by giving a brief description of the Schur complement. Let $R
= \mattwo{P}{X}{X^*}{Q}$, with $P\geq 0,Q>0$. The operator $R$ is positive if and only if the matrix $P
- XQ^{-1}X^*\geq 0$. The proof of this fact follows by first
conjugating $R$ by the positive matrix $I \oplus Q^{-1/2}$ which gives
$R' = \mattwo{P}{XQ^{-1/2}}{Q^{-1/2}X^\ast }{I}$, and then using the
fact that $\mattwo{P}{Y}{Y^*}{I}\geq 0$ if and only if $P - YY^*\geq
0$.

For an operator $T\in B(H)$, $\norm{T} = \inf\{M\,:\, M^2I-TT^*> 0\}$. 

Consider the Foguel operator $R_T$. We know that $\norm{R_T}\geq
1$. Let $M>1$. We have
\begin{align}
  M^2I - R_TR_T^* &= \mattwo{M^2I}{0}{0}{M^2I}-\mattwo{I+TT^*}{TV^*}{VT^*}{VV^*}\\
  & = \mattwo{(M^2-1)I - TT^*}{-TV^*}{-VT^*}{M^2I - VV^*}
\end{align}
By applying the Schur complement we see that this operator is positive
if and only if
\[(M^2-1)I - TT^* - TV^*(M^2I - VV^*)^{-1}VT^* \geq 0.\] Since $M>1$
and $V$ is a contraction we can expand $(M^2I-VV^*)^{-1}$ in its
Neumann series as $M^{-2}\sum_{j=0}^\infty (VV^*)^j/(M^{2j})$. Hence,
\begin{align}
  TV^*(M^2I-VV*)^{-1}VT^* & = T\left(M^{-2}\sum_{j=0}^\infty \frac{V(VV^*)^jV^*}{M^{2j}}\right)T^*\\
  & = M^{-2}TT^* \sum_{j=0}^\infty M^{-2j} = (M^2-1)^{-1}TT^*.
\end{align}

Hence, the positivity condition is
\[(M^2-1)I - TT^* - (M^2-1)^{-1}TT^* = (M^2-1)I -
M^2(M^2-1)^{-1}TT^*\geq 0.\] This happens if and only if
\[TT^* \leq M^{-2}(M^2-1)^2,\] which happens if and only if
\[\norm{T}\leq M^{-1}(M^2-1).\]
Hence, $\norm{T} = \norm{R_T}^{-1}(\norm{R_T}^2-1)$, which is the same
relation obtained in \eqref{eq6}.

\end{document}